\newtheorem{Theorem}{Theorem}[section]
\newtheorem{Lemma}[Theorem]{Lemma}
\newtheorem{Corollary}[Theorem]{Corollary}
\newtheorem{Definition}[Theorem]{Definition}
\begin{document}
\title{On Resolvability of a Graph Associated to a Finite Vector Space}
\author{U. Ali\thanks{Corresponding author.}, S. A. Bokhary, K. Wahid
\vspace{4mm}\\
\normalsize  Centre for Advanced Studies in Pure and Applied Mathematics,\\\normalsize
Bahauddin Zakariya University, Multan, Pakistan\\\normalsize
{\tt uali@bzu.edu.pk, sihtsham@gmail.com, kholawahid12@gmail.com}}

\date{}
\maketitle


\begin{abstract}
 The metric dimension of non-component graph, associated to a finite vector space, is determined.
   It is proved that the exchange property holds for resolving sets of the graph, except a special case.
    Some results are also related to an intersection graph.
\end{abstract}

\textit{AMS Subject Classification Number}: 05C12, 05C25, 05C62\\

\textit{Keywords: Graph, vector space, metric dimension, resolving set, exchange property}\\

\section{Introduction}
Let $\mathbb{V}$ be a vector space over a field $\mathbb{F}$ with a basis $\{\alpha _{1},
\alpha_{2},\ldots,\alpha_{n}\}$. A vector $v\in V$ is expressed, uniquely, as a
linear combination of the form $v =a_{1}\alpha_{1}+a_{2}\alpha_{2}+\cdots+a_{n}\alpha_{n}$. A graph called \emph{Non-Zero
Component graph} $\Gamma(\mathbb{V})$ is associated, by Angsuman Das in~\cite{Das}, to a finite dimensional vector space in the following way:\\
the vertex set of the graph $\Gamma(\mathbb{V})$ is the non-zero vectors and two vertices are joined by an edge if they share at least one
$\alpha_{i}$ with non-zero coefficient in their unique linear combination with respect to $\{\alpha _{1},
\alpha_{2},\ldots,\alpha_{n}\}$. It is proved in~\cite{Das} that $\Gamma(\mathbb{V})$ is independent of choice of basis, i.e., isomorphic non-zero component graphs are obtained for two different bases. Other basic properties and results about $\Gamma(\mathbb{V})$ can be found in~\cite{Das, Das1}, for example
\begin{Theorem}[\cite{Das1}, Theoerem 3.6]\label{order}
  If $\mathbb{V}$ be a $n$-dimensional vector space over a finite field $\mathbb{F}$ with $q$ elements, then the order of $\Gamma(\mathbb{V})$ is $q^n-1$ and the size of $\Gamma(\mathbb{V})$ is
  $$\frac{q^{2n}-q^n+1-(2q-1)^n}{2}.$$
\end{Theorem}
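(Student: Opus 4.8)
The plan is to establish the two assertions separately, the first being essentially immediate and the second a counting argument via the complement.

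\textbf{Order.} By construction the vertex set of $\Gamma(\mathbb{V})$ is $\mathbb{V}\setminus\{0\}$, and a vector space of dimension $n$ over a field with $q$ elements has exactly $q^n$ vectors; hence the graph has $q^n-1$ vertices. No real difficulty here.

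\textbf{Size.} The first step is to recast adjacency combinatorially. Fix the basis $\{\alpha_1,\dots,\alpha_n\}$ and, for a non-zero vector $v=\sum_i a_i\alpha_i$, set $\mathrm{supp}(v)=\{i: a_i\neq 0\}\subseteq\{1,\dots,n\}$, a non-empty set. By definition two distinct vertices $u,v$ are adjacent exactly when $\mathrm{supp}(u)\cap\mathrm{supp}(v)\neq\emptyset$; since $\Gamma(\mathbb{V})$ is independent of the basis, it suffices to count with respect to this one basis. Rather than enumerate edges directly I would count the non-edges — unordered pairs of distinct vertices with \emph{disjoint} supports — and subtract from $\binom{q^n-1}{2}$. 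The key quantity is the number of ordered pairs $(u,v)$ of non-zero vectors with $\mathrm{supp}(u)\cap\mathrm{supp}(v)=\emptyset$. For a fixed $A\subseteq\{1,\dots,n\}$ there are $(q-1)^{|A|}$ vectors with support $A$, so this count is $\sum (q-1)^{|A|}(q-1)^{|B|}$ over all ordered pairs of disjoint subsets $(A,B)$, including empty ones. That sum factors over coordinates — each index lies in $A$ only, in $B$ only, or in neither, contributing $(q-1)+(q-1)+1=2q-1$ — giving $(2q-1)^n$. I then strip off the terms with $A=\emptyset$ or $B=\emptyset$ by inclusion–exclusion: each such family sums to $\sum_B (q-1)^{|B|}=q^n$ and the doubly-empty term is $1$, leaving $(2q-1)^n-2q^n+1$ ordered pairs with non-empty disjoint supports (such supports force $u\neq v$ automatically). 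Dividing by $2$ yields the number of non-edges, and therefore
\[
|E(\Gamma(\mathbb{V}))|=\binom{q^n-1}{2}-\frac{(2q-1)^n-2q^n+1}{2}=\frac{(q^n-1)(q^n-2)-(2q-1)^n+2q^n-1}{2}.
\]
Expanding $(q^n-1)(q^n-2)=q^{2n}-3q^n+2$ and collecting terms gives exactly $\dfrac{q^{2n}-q^n+1-(2q-1)^n}{2}$.

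The only genuine obstacle is bookkeeping: one must apply the coordinate-by-coordinate factorization to \emph{all} ordered pairs of disjoint subsets and then carefully remove the empty-support contributions so that what remains counts honest pairs of distinct vertices. Once the $(2q-1)^n$ identity and the inclusion–exclusion correction are in place, the rest is routine algebra.
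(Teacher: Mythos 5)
Your argument is correct, and there is nothing in the paper to compare it against: Theorem~\ref{order} is imported from \cite{Das1} (Theorem 3.6) and the present paper gives no proof of it. Your derivation stands on its own. The order count is immediate, and your edge count via the complement is sound: non-adjacency of distinct non-zero vectors is exactly disjointness of their (non-empty) skeletons, the product formula $\sum_{(A,B)\ \text{disjoint}}(q-1)^{|A|}(q-1)^{|B|}=(2q-1)^n$ follows from the three choices (in $A$, in $B$, in neither) available to each coordinate, and the inclusion--exclusion removal of the empty-support terms correctly uses $\sum_{B}(q-1)^{|B|}=q^n$. The final algebra checks: $\binom{q^n-1}{2}-\frac{(2q-1)^n-2q^n+1}{2}=\frac{q^{2n}-q^n+1-(2q-1)^n}{2}$, and the formula agrees with small cases such as $n=2$, $q=2$ (three vertices, two edges). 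One small point worth making explicit if you write this up: disjoint non-empty supports force $u\neq v$, so the ordered count divides cleanly by $2$ with no diagonal correction --- you note this parenthetically, and it is the only place where a careless version of the argument could go wrong.
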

\begin{Theorem} [\cite{Das}, Theoerem 4.2] \label{complet}
  $\Gamma(\mathbb{V})$ is complete if and only if  $\mathbb{V}$ is a $1$-dimensional vector space.
\end{Theorem}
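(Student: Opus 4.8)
The plan is to prove the two implications separately: the ``if'' direction straight from the definition of $\Gamma(\mathbb{V})$, and the ``only if'' direction by contraposition.

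For the ``if'' direction, suppose $\dim\mathbb{V}=1$ with basis $\{\alpha_1\}$. Every non-zero vector has the form $v=a_1\alpha_1$ with $a_1\neq 0$, so any two distinct vertices $u=a_1\alpha_1$ and $v=b_1\alpha_1$ both carry a non-zero coefficient on $\alpha_1$; by the adjacency rule they are joined by an edge. Since this holds for every pair, $\Gamma(\mathbb{V})$ is complete.

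For the ``only if'' direction I would argue by contraposition: assume $n=\dim\mathbb{V}\geq 2$ and fix a basis $\{\alpha_1,\alpha_2,\dots,\alpha_n\}$. Regarded as vertices, $\alpha_1$ and $\alpha_2$ have coordinate tuples $(1,0,0,\dots,0)$ and $(0,1,0,\dots,0)$ with respect to this basis, whose supports are disjoint; hence they share no $\alpha_i$ with non-zero coefficient and are not adjacent. Therefore $\Gamma(\mathbb{V})$ is not complete. (If one wishes to cover $n=0$, the graph is empty and the hypothesis $\dim\mathbb{V}=1$ fails as well, so the equivalence is unaffected; in any case $\Gamma$ is taken to be defined for positive-dimensional spaces.)

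The only point that deserves a word is well-definedness: a priori the property ``$\Gamma(\mathbb{V})$ is complete'' might seem to depend on the chosen basis, but since $\Gamma(\mathbb{V})$ is independent of the basis up to isomorphism (as recalled in the introduction, following~\cite{Das}), it suffices to verify or refute completeness using one convenient basis, which is exactly what the two arguments above do. I do not anticipate any genuine obstacle here; the whole proof is a matter of unwinding the definition of adjacency.
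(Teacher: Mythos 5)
Your proof is correct. The paper itself gives no proof of this statement --- it is imported verbatim from \cite{Das} (Theorem 4.2) as background --- and your argument (all scalar multiples of $\alpha_1$ are pairwise adjacent when $n=1$; the basis vectors $\alpha_1$ and $\alpha_2$ have disjoint skeletons and are non-adjacent when $n\geq 2$, with basis-independence handled by the isomorphism result recalled in the introduction) is exactly the standard direct verification one would expect.
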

The study of graphs associated to algebraic structures has, recently, got much attention from researchers to expose the relation between algebra and graph theory: zero divisor graph associated to a commutative ring was discussed in \cite{Anderson, Beck}; commuting graphs for groups were studied in \cite{bates1,bates2,com}; power graphs for groups and semigroups were focused in \cite{Cameron, Chakrabarty, Moghaddamfar}; the papers \cite{Rad,Talebi} are devoted to investigate intersection graphs assigned to a vector space; and so on.

In this paper, we study the metric dimension and the exchange property for resolving sets of $\Gamma(\mathbb{V})$ of a
  vector space $\mathbb{V}$. The vector space under consideration is of dimension $n$ and defined over a finite field $\mathbb{F}$ having $q$ elements.

\section{Metric Dimension}
Let $ \Gamma= (V, E)$ be a finite simple connected
graph. The \emph{distance} $d(u,v)$ between two vertices
$u,v\in V$ is the length of a shortest path between them.
Let $W=\{w_{1},w_{2},\dots ,$ $w_{k}\}$ be an ordered set of
vertices of $\Gamma$ and let $v$ be a vertex of $\Gamma$. The
\emph{representation} $r(v|W)$ of $v$ with respect to $W$ is the
$k$-tuple $(d(v,w_{1}),d(v,w_{2}),\ldots
,d(v,w_{k}))$. $W$ is called a \textit{resolving set}
\cite{Chartrand3} or \emph{locating set} \cite{Slater1} if every
vertex of ${\Gamma}$ is uniquely identified by its distances from
the vertices of $W$, or equivalently, if distinct vertices of
${\Gamma}$ have distinct representations with respect to $W$. Two vertices $v$
and $u$ are said to be resolved by a vertex $w\in W$ if $d(v,w)\neq d(u,w)$.
A resolving set of minimum cardinality is called a \textit{metric basis} for
${\Gamma}$ and this cardinality is the \emph{metric dimension}
of $\Gamma$ (see \cite{Buczkowski, Harary,   Imran2, tomescu,
jav}). A resolving set $W$ of cardinality $k$ is called  \emph{minimal resolving set}
if it does not contain a resolving set of cardinality $k-1$ as a proper subset. For other basic terms and concepts, the reader is referred to~\cite{West}.
An application of the metric dimension and basis of a graph in robot navigation problem is considered in~\cite{khuller} by S.
Khuller.

Given a vertex $u$ in a graph $\Gamma$, \emph{the open neighborhood}
of $u$ in $\Gamma$, denoted by $N(u)$, is the set
$$\{v\in V| d(u,v)=1\}.$$
And \emph{the closed neighborhood} of $u$ in $\Gamma$, denoted by
$N[u]$, is the set
$$\{v\in V | d(u,v)=1\}\cup\{u\}.$$
 For two vertices $u$ and $v$ in a graph $\Gamma$, define $u\equiv v$ if $N[u]=N[v]$ or $N(u)=N(v)$. The relation $\equiv$ is an
 equivalence relation (see~\cite{Hernando}). If $u\equiv v$ then $u$ and $v$ are called \emph{twins}.
\begin{Lemma}[\cite{Hernando}, Corollary 2.4. ]\label{verclass}
Suppose $u,v$ are twins in a connected graph $\Gamma$ and $W$ resolves $\Gamma$. Then $u$ or $v$ is in $W$.
Moreover, if $u\in W$ and $v\notin W$, then $(W\setminus \{u\})\cup \{v\}$ also resolves $\Gamma$.
\end{Lemma}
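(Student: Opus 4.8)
The engine of both parts is a single distance fact, which I would prove first: \emph{if $u,v$ are twins and $z\notin\{u,v\}$ is any other vertex, then $d(z,u)=d(z,v)$} (call this $(\star)$). To see $(\star)$, split on the definition of twins. If $N(u)=N(v)$, then $u\not\sim v$ (otherwise $u\in N(v)=N(u)$, impossible), and for a shortest $z$--$u$ path $z=p_0,p_1,\dots,p_k=u$ (here $k\ge 1$ since $z\ne u$) the penultimate vertex $p_{k-1}$ lies in $N(u)=N(v)$, so $p_0,\dots,p_{k-1},v$ is a $z$--$v$ walk of length $k$; hence $d(z,v)\le d(z,u)$, and symmetry gives equality. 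If instead $N[u]=N[v]$, then $v\in N[v]=N[u]$ forces $u\sim v$, so $|d(z,u)-d(z,v)|\le 1$ by the triangle inequality; assuming $d(z,v)=d(z,u)+1$ and inspecting the penultimate vertex of a shortest $z$--$u$ path, which now lies in $N[u]=N[v]$, one again produces a $z$--$v$ walk of length at most $d(z,u)$, a contradiction. This proves $(\star)$, and the first assertion follows at once: were neither $u$ nor $v$ in $W$, then $(\star)$ would give $d(u,w)=d(v,w)$ for every $w\in W$, so $r(u\mid W)=r(v\mid W)$, contradicting that $W$ resolves $\Gamma$.

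For the second assertion, set $W'=(W\setminus\{u\})\cup\{v\}$ and assume, for contradiction, that some pair $x\ne y$ is not resolved by $W'$. Since $W$ resolves $\Gamma$ while $W'$ differs from $W$ only by the swap $u\leftrightarrow v$, the representations $r(x\mid W)$ and $r(y\mid W)$ can disagree only in the coordinate at $u$; thus $d(x,u)\ne d(y,u)$, whereas $d(x,v)=d(y,v)$ and $d(x,w)=d(y,w)$ for all $w\in W\setminus\{u\}$. If $x,y\notin\{u,v\}$, then $(\star)$ yields $d(x,u)=d(x,v)=d(y,v)=d(y,u)$, a contradiction; and $\{x,y\}=\{u,v\}$ is impossible because $d(x,v)=d(y,v)$ would force $d(u,v)=0$. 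By the symmetry of $x$ and $y$, the remaining case is $x\in\{u,v\}$, $y\notin\{u,v\}$; if $x=v$ then $d(x,v)=0$ forces $d(y,v)=0$, i.e. $y=u$, again impossible, so we are left with $x=u$, $y\notin\{u,v\}$.

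This last configuration is the only delicate point, and it is where I expect the real work to lie: $(\star)$ says nothing about $d(u,\cdot)$ versus $d(v,\cdot)$, so one must re-use the hypothesis that $W$ resolves $\Gamma$, now applied to the auxiliary pair $\{v,y\}$. Some $w_0\in W$ satisfies $d(v,w_0)\ne d(y,w_0)$. If $w_0=u$, this contradicts $d(u,v)=d(y,u)$, an equality that holds because $d(x,v)=d(y,v)$ and, by $(\star)$, $d(y,v)=d(y,u)$. If $w_0\ne u$, then $w_0\notin\{u,v\}$ (as $v\notin W$), so $(\star)$ converts $d(v,w_0)\ne d(y,w_0)$ into $d(u,w_0)\ne d(y,w_0)$, contradicting $d(u,w)=d(y,w)$ on $W\setminus\{u\}$. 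Hence no unresolved pair exists, and $W'$ resolves $\Gamma$.
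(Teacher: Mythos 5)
Your proposal is correct, and it follows the same skeleton as the paper's proof: the central distance fact that twins are equidistant from every third vertex, from which the first assertion is immediate, and a swap argument for the second. The difference is in how much work each version actually does. The paper states the distance fact without proof and disposes of the swap with the single sentence that a pair resolved by $u$ is also resolved by $v$; taken literally that claim is false when the pair in question contains $u$ or $v$ itself (e.g.\ $d(u,u)=0$ while $d(u,v)$ may be $1$), and the paper never addresses this. You prove the distance fact $(\star)$ in both twin cases (correctly noting that $N(u)=N(v)$ forces $u\not\sim v$ while $N[u]=N[v]$ forces $u\sim v$), and, more importantly, you isolate and resolve exactly the configuration the paper glosses over: an unresolved pair of the form $\{u,y\}$ with $y\notin\{u,v\}$, which you handle by invoking the resolving property of $W$ on the auxiliary pair $\{v,y\}$ and transporting the witness through $(\star)$. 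That extra step is genuinely needed for a complete argument, so your version is the more rigorous of the two. One trivial slip: in the case $x=v$, the equality $d(y,v)=0$ forces $y=v$, not $y=u$; the contradiction with $y\notin\{u,v\}$ stands either way.
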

\begin{proof}
For any two vertices $u\equiv v$ a connected graph $\Gamma$. We have
$d(u,w)$=$d(v,w)$ for all $w \in
V\setminus \{u,v\}$. Therefore, no vertex can resolve $u$ and $v$, which forces either $u\in W$ or $v\in W$. Moreover, if two vertices are resolved by $u$ then they are also resolved by $v$. Hence, $(W\setminus \{u\})\cup \{v\}$ also resolves $\Gamma$.
\end{proof}

In order avoid confusion, we denote a basis
$\{\alpha_{1},\alpha_{2},\ldots,\alpha_{n}\}$ of a vector space $\mathbb{V}$ by $\mathbb{V}$-basis and a
minimum resolving set by  $\Gamma(\mathbb{V})$-basis. We denote the dimension of $\mathbb{V}$ by $dim_{\mathbb{V}}$ and
the metric dimension of $\Gamma(\mathbb{V})$ by $dim_{\Gamma(\mathbb{V})}$. The elements of the set $V$ represent non-zero vectors of the vector space
$\mathbb{V}$ and vertices of the graph $\Gamma(\mathbb{V})$ at the same time.
\begin{Definition} [\cite{Das1}]
 Skeleton $S_{v}$ of a vector $v$ is the set of $\alpha$'s with non-zero coefficients in the basic representation of $v$ with respect to $\mathbb{V}$-basis $\{\alpha_1,\alpha_2,\ldots, \alpha_n\}.$
\end{Definition}
 A vector $v$ is not unique for a skeleton $S_v$, i.e., two distinct vectors may have the same skeleton. The length of a skeleton $S_{v}$ is the number of vectors of $\mathbb{V}$-basis it contains.
For two vertices $u$ and $v$ in $\Gamma(\mathbb{V})$, we define the equivalence relation $u\cong v$ if $S_{u}=S_{v}$. Let $C_{u}$ denote the equivalence class containing $u$.\\
The following lemma plays a key role in the proof of our main theorems.
\begin{Lemma}\label{equi}
In $\Gamma(\mathbb{V})$, the two equivalence relations $\equiv$ and $\cong$ are equivalent, i.e., both relations bring the same partition to the set of vertices of $\Gamma(\mathbb{V})$ .
\end{Lemma}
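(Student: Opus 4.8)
The plan is to route both directions through one structural identity tying skeletons to neighbourhoods in $\Gamma(\mathbb{V})$. Regard each basis vector $\alpha_i$ as a vertex; since $S_{\alpha_i}=\{\alpha_i\}$, a vertex $w\neq\alpha_i$ is adjacent to $\alpha_i$ exactly when $S_w\cap\{\alpha_i\}\neq\emptyset$, i.e.\ exactly when $\alpha_i\in S_w$, while $w=\alpha_i$ is trivial. By symmetry of adjacency this reads $\alpha_i\in S_w\iff w\in N[\alpha_i]\iff\alpha_i\in N[w]$, so
\[
S_w=\{\alpha_i\ :\ \alpha_i\in N[w]\}.
\]
Thus the closed neighbourhood of a vertex determines its skeleton; I will call this the structural identity.

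First I would show $\cong\ \Rightarrow\ \equiv$. Assume $S_u=S_v$. A non-zero vector has non-empty skeleton, so $S_u=S_v\neq\emptyset$, hence $S_u\cap S_v\neq\emptyset$ and $u\sim v$. If $w\in N(u)\setminus\{v\}$ then $S_w\cap S_u\neq\emptyset$, so $S_w\cap S_v\neq\emptyset$, and $w\neq v$ gives $w\in N(v)$; by symmetry $N(u)\setminus\{v\}=N(v)\setminus\{u\}$. Since $u\sim v$,
\[
N[u]=(N(u)\setminus\{v\})\cup\{u,v\}=(N(v)\setminus\{u\})\cup\{u,v\}=N[v],
\]
so $u\equiv v$.

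For $\equiv\ \Rightarrow\ \cong$ the key reduction is that in $\Gamma(\mathbb{V})$ distinct vertices never have equal open neighbourhoods. Suppose $N(u)=N(v)$ with $u\neq v$. If $u\sim v$, then $v\in N(u)=N(v)$, which is impossible; so $u\not\sim v$ and $S_u\cap S_v=\emptyset$, and it will suffice to contradict this by deriving $S_u=S_v$. If some $\alpha_i\in S_u$ has $\alpha_i\neq u$, then $\alpha_i\in N(u)=N(v)$ yields $\alpha_i\in S_v$; the same argument with $u,v$ interchanged applies. Hence $S_u\subseteq S_v$ unless $u$ is itself a basis vector, and likewise $S_v\subseteq S_u$ unless $v$ is; if neither is a basis vector we get $S_u=S_v$, done. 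If $u=\alpha_i$ while $v$ is not a basis vector, then $S_v\subseteq\{\alpha_i\}$, so $S_v=\{\alpha_i\}$, forcing $u\sim v$ — impossible. The only survivor is $u=\alpha_i$, $v=\alpha_j$ with $i\neq j$ both basis vectors; taking a third basis index $k$, the vertex $\alpha_i+\alpha_k$ lies in $N(\alpha_i)=N(\alpha_j)$ but is not adjacent to $\alpha_j$ since $\alpha_j\notin S_{\alpha_i+\alpha_k}=\{\alpha_i,\alpha_k\}$ — contradiction. With the open-neighbourhood clause eliminated, $u\equiv v$ forces $N[u]=N[v]$, and the structural identity gives $S_u=S_v$, i.e.\ $u\cong v$.

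The only real obstacle is that last step, which hides a hypothesis: the third-index argument requires $\dim_{\mathbb{V}}\ge 3$. For $\dim_{\mathbb{V}}=2$ one must look at $\Gamma(\mathbb{V})$ itself — when $q\ge 3$ each $\alpha_i$ has other non-zero scalar multiples and still $N(\alpha_1)\neq N(\alpha_2)$, but when $q=2$ the graph is the path $P_3$ whose two leaves are $N(\cdot)$-twins with the distinct one-element skeletons $\{\alpha_1\}$ and $\{\alpha_2\}$. So $\equiv$ and $\cong$ coincide for every $\mathbb{V}$ apart from $(\dim_{\mathbb{V}},q)=(2,2)$; the substantive point of the proof is that the open-neighbourhood clause in the definition of $\equiv$ is essentially inert on $\Gamma(\mathbb{V})$, with that one graph as the exception.
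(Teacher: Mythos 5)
Your proof is correct and rests on the same core mechanism as the paper's --- using the basis vectors $\alpha_i$ as witnesses that detect membership in a skeleton --- but you carry it out far more carefully, and that extra care turns up something the paper misses. The paper's argument for $\equiv\ \Rightarrow\ \cong$ is two lines: if $\alpha_i\in S_u\setminus S_v$ then ``the vertex $\alpha_i$ is adjacent to $u$ and not adjacent to $v$,'' contradicting equal neighbourhoods. This silently assumes the witness $\alpha_i$ is distinct from $u$, and it conflates the two clauses $N[u]=N[v]$ and $N(u)=N(v)$ in the definition of $\equiv$. Your separate treatment of the open-neighbourhood clause is the genuinely new content: you show non-adjacent twins can only be a pair of bare basis vectors, and that this actually occurs when $(\dim_{\mathbb{V}},q)=(2,2)$, where $\Gamma(\mathbb{V})$ is a path on three vertices whose leaves $\alpha_1,\alpha_2$ satisfy $N(\alpha_1)=N(\alpha_2)$ yet $S_{\alpha_1}\neq S_{\alpha_2}$. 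So the lemma as stated is false in that one case, and the paper's proof is where the gap hides. This does not damage the paper downstream --- the lemma is only invoked for $q=2,\ n\geq 3$ and for $q\geq 3$, and the $(2,2)$ case of Theorem 2.5 is handled by hand --- but your version, with the explicit exception, is the statement that is actually true, and your structural identity $S_w=\{\alpha_i : \alpha_i\in N[w]\}$ is a cleaner way to package both directions than the paper's ad hoc contradiction.
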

\begin{proof}
Let $u\equiv v$. Then, $u$ and $v$ have the same neighborhood and we need to show that $S_{u}=S_{v}.$ Suppose on contrary that $u$ and $v$ have different skeletons. Without loss generality, there exists some $\alpha_i\in S_{u}$ and $\alpha_i\notin S_{v}$. Consequently, the vertex $\alpha_i$ is adjacent to $u$ and not adjacent to $v$ in $\Gamma(\mathbb{V})$. It contradicts that $u$ and $v$ have the same neighborhood. Conversely, let $u\cong v$ then, by definition of the graph $\Gamma(\mathbb{V})$, $u\equiv v$.
\end{proof}
\begin{Theorem}\label{Mertic Dimension}
Let $\mathbb{V}$ be a vector space over a field $\mathbb{F}$ of $q$ elements
 with $\{\alpha_{1},\alpha_{2},\ldots,\alpha_{n}\}$ as $\mathbb{V}$-basis:\\
a) if $q=2$ and $n=2$ then
$dim_{\Gamma(\mathbb{V})}= 1$;\\
b) if $q=2$ and $n\geq 3$ then
$dim_{\Gamma(\mathbb{V})}= n$; and\\
c) if $q\geq 3$ then
$dim_{\Gamma(\mathbb{V})}= \sum_{k=1}^n\binom{n}{k}((q-1)^k-1)$.
\end{Theorem}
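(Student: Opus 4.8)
The plan is to reduce all three statements to the twin structure of $\Gamma(\mathbb{V})$ together with a diameter bound. By Lemma~\ref{equi} the twin classes of $\Gamma(\mathbb{V})$ are precisely the skeleton classes $C_v$; a skeleton of length $k$ is carried by exactly $(q-1)^k$ vectors and there are $\binom{n}{k}$ skeletons of length $k$, so $V$ is partitioned into $2^n-1$ twin classes of total size $\sum_{k=1}^{n}\binom{n}{k}(q-1)^k=q^n-1$. Moreover, for $n\ge 2$ the graph $\Gamma(\mathbb{V})$ has diameter $2$: it is non-complete by Theorem~\ref{complet}, and if $S_u\cap S_v=\emptyset$ then, choosing $\alpha_i\in S_u$ and $\alpha_j\in S_v$, the vector $\alpha_i+\alpha_j$ has skeleton $\{\alpha_i,\alpha_j\}$ and is adjacent to both $u$ and $v$. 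Hence for any ordered set $W$ and any $v\notin W$ the representation $r(v\mid W)$ is determined by $N(v)\cap W$, each coordinate being $0$, $1$, or $2$ according as the corresponding vertex is $v$, is adjacent to $v$, or neither. Part a) needs none of this: for $q=2,n=2$ the graph is the path $\alpha_1-(\alpha_1+\alpha_2)-\alpha_2$, whose metric dimension is $1$.

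For part c) the lower bound is immediate from Lemma~\ref{verclass}: a resolving set may omit at most one vertex of each twin class, so $dim_{\Gamma(\mathbb{V})}\ge \sum_{k=1}^{n}\binom{n}{k}\bigl((q-1)^k-1\bigr)$. For the matching upper bound I would take $W$ to be the set of all vertices except one fixed representative of each twin class, so that $|W|=(q^n-1)-(2^n-1)=\sum_{k=1}^{n}\binom{n}{k}\bigl((q-1)^k-1\bigr)$, and verify that $W$ resolves $\Gamma(\mathbb{V})$. A pair one of whose members lies in $W$ is resolved by that member; so consider $u,v\notin W$, which then satisfy $S_u\ne S_v$, say $\alpha_i\in S_u\setminus S_v$. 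Here the hypothesis $q\ge 3$ is essential: the twin class of $\alpha_i$ has $q-1\ge 2$ vectors, so $W$ contains some $w$ with $S_w=\{\alpha_i\}$; this $w$ is distinct from $u$ and $v$, adjacent to $u$ but not to $v$, hence resolves them. (When $n=1$, $\Gamma(\mathbb{V})=K_{q-1}$ and only one vertex lies outside $W$, so there is nothing to check.)

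For part b) the upper bound $dim_{\Gamma(\mathbb{V})}\le n$ is witnessed by $W_0=\{\alpha_1,\ldots,\alpha_n\}$: since $q=2$ a vertex equals its own skeleton, and the $i$-th coordinate of $r(v\mid W_0)$ lies in $\{0,1\}$ precisely when $\alpha_i\in S_v$, so $r(\cdot\mid W_0)$ recovers $S_v=v$ and $W_0$ is resolving. The lower bound is the crux, and it is the step I expect to be the main obstacle. With $q=2$ every twin class is a singleton, so Lemma~\ref{verclass} gives nothing and a counting argument is required. If $\Gamma(\mathbb{V})$ had a resolving set of size at most $n-1$, then (enlarging it if needed, since supersets of resolving sets resolve) it would have a resolving set $W$ with $|W|=n-1$. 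By the diameter-$2$ remark, for $v\notin W$ the representation $r(v\mid W)$ is determined by the $(n-1)$-tuple recording, for each $w\in W$, whether $S_v\cap S_w\ne\emptyset$; thus the vertices outside $W$ realize at most $2^{n-1}$ distinct representations. But $|V\setminus W|=(2^n-1)-(n-1)=2^n-n$, and $2^n-n>2^{n-1}$ for $n\ge 3$ (equivalently $2^{n-1}>n$), so two distinct vertices outside $W$ share a representation, a contradiction. Therefore $dim_{\Gamma(\mathbb{V})}\ge n$, and with the upper bound $dim_{\Gamma(\mathbb{V})}=n$.
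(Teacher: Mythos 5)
Your proposal is correct, and parts a) and c) follow essentially the same route as the paper: part c) rests, exactly as in the paper, on Lemma~\ref{verclass} applied to the twin classes (which Lemma~\ref{equi} identifies with the skeleton classes) for the lower bound, and on the observation that the omitted representatives are separated by degree-one skeleton vertices for the upper bound. The genuine difference is in the lower bound of part b), where $q=2$ makes every twin class a singleton and Lemma~\ref{verclass} gives nothing. The paper proceeds by exhibiting explicit hard-to-resolve pairs: the all-ones vector $u$ and each $v_i$ with $S_{v_i}=\{\alpha_1,\ldots,\alpha_n\}\setminus\{\alpha_i\}$ are distinguished only by $\alpha_i$ (and by $u,v_i$ themselves), which forces one vertex from each of $n$ essentially disjoint pairs into any resolving set, at the cost of a two-case analysis according to whether $u$ lies in the resolving set. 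You instead prove that $\Gamma(\mathbb{V})$ has diameter $2$ for $n\ge 2$, so that the representation of any vertex outside a candidate set $W$ takes values in $\{1,2\}^{|W|}$, and then pigeonhole $2^{n}-n$ outside vertices against $2^{n-1}$ possible representations, using $2^{n-1}>n$ for $n\ge 3$. Both arguments are sound; yours is shorter and avoids the case split, while the paper's has the side benefit of locating concrete vertices that every resolving set must contain, which is in the spirit of its later comparison between $\Gamma(\mathbb{V})$-bases and $\mathbb{V}$-bases. Note also that your counting argument transparently fails at $n=2$ (where $2^{n-1}=n$), which is consistent with part a).
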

\begin{proof}
a) The two vertices $\alpha_2$ and $\alpha_1+\alpha_2$ can be resolved by the vertex $\alpha_1$, hence $\Gamma(\mathbb{V})$-basis$=\{\alpha_1\}.$\\
 b) For $q=2$, a class $C_v$ for all $v\in V$ is of length $1$, i.e., $\mid C_v\mid =1$ and, by Lemma~\ref{equi}, all vertices $\Gamma(\mathbb{V})$ are singleton twins.
 Any two vertices, being singleton twins, can be resolved by some $\alpha_i\in\mathbb{V}$-basis. Therefore, the $\mathbb{V}$-basis is a resolving set. To show that this $\mathbb{V}$-basis is a $\Gamma(\mathbb{V})$-basis, we need to show that $dim_{\Gamma(\mathbb{V})}\geq n.$
 Let $u$ be the vertex whose skeleton is $S_u=\{\alpha_1,\alpha_2,\ldots,\alpha_n\}$, i.e., $u=\alpha_1+\alpha_2+\cdots+\alpha_n$ and $X=\{v_1,v_2,\ldots,v_n\}$ with the property that $S_{v_i}=\mathbb{V}$-basis$\setminus \{\alpha_i\}$ for $1\leq i\leq n$.\\
 \textit{Case1}. When $u\notin\Gamma(\mathbb{V})$-basis. The vertex $u$ and a vertex $v_i\in X$ can only be resolved by $\alpha_i$, because $d(u,v)=d(v_i,v)$ for all $v\neq\alpha_i$. Therefore, at least, one vertex from each pair $(\alpha_1, v_i)$ must be part of the $\Gamma(\mathbb{V})$-basis. There are $n$ such pairs, hence $dim_{\Gamma(\mathbb{V})}\geq n.$\\
 \textit{Case2}. When $u\in\Gamma(\mathbb{V})$-basis. Any two vertices $v_j,v_k\in X$ can only be resolved either by $\alpha_j$ or by $\alpha_k$, because $d(v_j,v)=d(v_k,v)$ for $v\neq\alpha_j,\alpha_k$. Therefore, at least, one vertex from each pair of $n-1$ pairs, of type $(\alpha_1, v_i)$, must be part of the $\Gamma(\mathbb{V})$-basis along with the vertex $u$ and hence the result follows.\\
 c) Since $q\geq3$, so $|C_v|\geq2$ for all $v\in V$ and, by Lemma~\ref{equi}, there is no singleton twin. By Lemma~\ref{verclass}, a resolving set must contain all vertices, except one, in every class $C_v$ of length $k$ for $1\leq k\leq n$; there are $(q-1)^k$ vertices in a class $C_v$ of length $k$ (there are $q-1$ choices and $k$ places); and for a fixed $k$, there are $\binom{n}{k}$ distinct classes. Therefore, any resolving set $W$ contains, at least, $\sum_{k=1}^n\binom{n}{k}((q-1)^k-1)$ vertices. There are $n$ distinct classes of length $1$ and, by Lemma~\ref{verclass}, $W$ contains the set $D=\{a_1\alpha_1,a_2\alpha_2,\ldots,a_n\alpha_n\}$, where $a_i\in \mathbb{F}$. Any two vertices having different skeletons can be resolved by some vertex in the set $D$, hence the minimum cardinality of $W$ is $\sum_{k=1}^n\binom{n}{k}((q-1)^k-1)$.
\end{proof}
\begin{Corollary}
For $q\geq 3$, any $\Gamma(\mathbb{V})$-basis contains a $\mathbb{V}$-basis.
\end{Corollary}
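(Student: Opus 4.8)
The plan is to exploit the length-one equivalence classes of $\Gamma(\mathbb{V})$ together with the twin-covering property recorded in Lemma~\ref{verclass}. The key observation is that each vector $\alpha_i$ of the $\mathbb{V}$-basis sits inside a class $C_{\alpha_i}=\{a\alpha_i : a\in\mathbb{F}\setminus\{0\}\}$ consisting of all non-zero vectors with skeleton $\{\alpha_i\}$; by Lemma~\ref{equi} these are exactly the classes of length one, and each has cardinality $q-1$. If I can force a $\Gamma(\mathbb{V})$-basis to meet every such $C_{\alpha_i}$, then picking one element $c_i\alpha_i$ from each intersection yields $n$ non-zero scalar multiples of basis vectors, which automatically form a $\mathbb{V}$-basis contained in the given $\Gamma(\mathbb{V})$-basis.

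To carry this out I would proceed in three steps. First, identify the sets $C_{\alpha_i}$ as above and note $|C_{\alpha_i}|=q-1$ (there are $q-1$ choices of the non-zero scalar). Second, recall that all members of a fixed class are pairwise twins, so Lemma~\ref{verclass} forces any resolving set $W$ of $\Gamma(\mathbb{V})$ to contain all but at most one vertex of $C_{\alpha_i}$; hence $|W\cap C_{\alpha_i}|\ge q-2$ for each $i$. Third, invoke the hypothesis $q\ge 3$, which gives $q-2\ge 1$, so $W\cap C_{\alpha_i}\neq\emptyset$ for every $i$; choosing $c_i\alpha_i\in W\cap C_{\alpha_i}$ produces the set $\{c_1\alpha_1,\ldots,c_n\alpha_n\}\subseteq W$, which is a $\mathbb{V}$-basis since each $c_i\neq 0$ and $\{\alpha_1,\ldots,\alpha_n\}$ is independent.

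There is essentially no hard step here: the only thing to get right is applying Lemma~\ref{verclass} to the correct equivalence classes and computing $|C_{\alpha_i}|$ precisely, after which the inequality $q-2\ge 1$ closes the argument immediately. It is worth remarking that the hypothesis $q\ge 3$ is genuinely needed: when $q=2$ each $C_{\alpha_i}$ is a singleton, so a resolving set is not compelled to contain any $\alpha_i$, and indeed (as the proof of Theorem~\ref{Mertic Dimension}(b) shows) a $\Gamma(\mathbb{V})$-basis may consist of vectors of larger skeleton and contain no $\mathbb{V}$-basis at all.
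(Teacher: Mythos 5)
Your proof is correct and is essentially the paper's own argument: the paper's proof just points back to the set $D=\{a_1\alpha_1,a_2\alpha_2,\ldots,a_n\alpha_n\}$ extracted in part (c) of the proof of Theorem~\ref{Mertic Dimension}, and that set is obtained exactly as you describe, by applying Lemma~\ref{verclass} to the $n$ length-one twin classes $C_{\alpha_i}$ of size $q-1\geq 2$. Your write-up merely makes explicit the counting ($|W\cap C_{\alpha_i}|\geq q-2\geq 1$) that the paper leaves implicit.
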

\begin{proof}
The set $D=\{a_1\alpha_1,a_2\alpha_2,\ldots,a_n\alpha_n\}$ in the proof part $c)$ of Theorem~\ref{Mertic Dimension} is a $\mathbb{V}$-basis.
\end{proof}
For $q=2,$ the above corollary is not true. For example, let $n=3$ the set
$ \{\alpha_1, \alpha_1+\alpha_3,\alpha_3\}$
is a $\Gamma(\mathbb{V})$-basis, but not a $\mathbb{V}$-basis.
\section{Exchange Property}
Resolving sets are said to have \emph{the exchange property} in a graph
$\Gamma$ if whenever $W_1$ and $W_2$ are minimal resolving sets for
$\Gamma$ and $r\in W_1$, then there exists $s\in W_2$ so that
$(W_2\setminus\{s\})\cup\{r\}$ is also a minimal resolving set
(see~\cite{Boutin}). If a graph $\Gamma$ has the exchange property,
then every minimal resolving set for $\Gamma$ has the same size and
algorithmic methods for finding the metric dimension of $\Gamma$ are
more feasible. Similarly exchange property for dominating sets of a
graph was also introduced in~\cite{Boutin}. In this paper, the
exchange property of a graph $\Gamma$ always means the property for
minimal resolving sets.

For a vector space $\mathbb{V}$, the exchange
property holds for $\mathbb{V}$-bases. Resolving sets behave
like $\mathbb{V}$-bases in a vector space $\mathbb{V}$, i.e., each vertex in the graph can be
uniquely identified relative to the vertices of a resolving set.
But, unlike $\mathbb{V}$-bases in vector spaces, resolving sets do not always
have the exchange property. To show the exchange property does not
hold in a graph, it is sufficient to show that there exist two minimal
resolving sets of different size. However, the condition is not
necessary, i.e., the exchange property does not hold
does not imply that there are minimal resolving sets of different
size.
Results about the exchange property for
different graphs can be found in the literature, for example
\begin{Theorem} [\cite{Boutin}, Theorem 3]
The exchange property holds for resolving sets in trees.
\end{Theorem}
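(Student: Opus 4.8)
The plan is to invoke the classical structural description of resolving sets of a tree, read off from it exactly what the minimal resolving sets are, and then verify the exchange condition by an explicit one-for-one swap. Recall the standard terminology for a tree $T$ that is not a path: a vertex of degree at least $3$ is a \emph{major vertex}; for each leaf $u$ there is a nearest major vertex $v(u)$, joined to $u$ by a \emph{leg}, i.e.\ a path all of whose internal vertices have degree $2$; a major vertex $v$ is \emph{exterior} if $v=v(u)$ for some leaf $u$, and its \emph{terminal degree} $t(v)$ is the number of such leaves. The classical characterization (Chartrand, Eroh, Johnson and Oellermann) states that $W\subseteq V(T)$ is a resolving set of $T$ if and only if, for every exterior major vertex $v$, the set $W$ contains a vertex different from $v$ on all but at most one of the $t(v)$ legs at $v$. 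I would also record the easy fact that legs attached to distinct exterior major vertices are pairwise vertex-disjoint, so the conditions imposed at different exterior major vertices do not interfere.

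Next I would determine the minimal resolving sets. Using the characterization together with leg-disjointness, one checks that if $W$ contains a vertex lying on no leg, or two vertices on a single leg, or vertices on all $t(v)$ legs at some exterior major vertex $v$, then some one vertex of $W$ may be removed with $W$ still resolving $T$; hence a minimal resolving set consists of exactly one vertex from each of exactly $t(v)-1$ legs at every exterior major vertex $v$, and nothing more. Conversely any such set resolves $T$ and admits no removal, hence is minimal. In particular every minimal resolving set has cardinality $\sum_v (t(v)-1)$, so all minimal resolving sets of $T$ have the same size.

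Finally I would prove the exchange property. Let $W_1,W_2$ be minimal resolving sets and $r\in W_1$; by the previous step $r$ lies on a leg $P$ of some exterior major vertex $v$. If $W_2$ meets $P$, let $s$ be the unique vertex of $W_2$ on $P$: replacing $s$ by $r$ only moves, within $P$, the vertex witnessing that $W_2$ meets $P$, so $(W_2\setminus\{s\})\cup\{r\}$ still satisfies the characterization and is again minimal. If $W_2$ does not meet $P$, then $P$ is the unique leg at $v$ missed by $W_2$; since $W_1$ meets $P$ it misses some other leg $Q$ at $v$, whence $W_2$ meets $Q$, and taking $s$ to be the vertex of $W_2$ on $Q$, the set $(W_2\setminus\{s\})\cup\{r\}$ now misses $Q$ instead of $P$ at $v$ and is unchanged elsewhere, hence is once more a minimal resolving set. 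In either case the required $s$ exists.

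The single point that needs separate attention is when $T$ is a path: then the metric dimension is $1$, the characterization above does not apply, and for $P_n$ with $n\ge 4$ there actually exist minimal resolving sets of different cardinalities (an endpoint on one hand, a suitable pair of interior vertices on the other), so the statement should be understood for trees other than such paths, with $P_2$ and $P_3$ checked directly (their only minimal resolving sets are the two endpoints). Isolating and disposing of the path case is, to my mind, the one genuinely delicate step; the remainder is bookkeeping once the Chartrand--Eroh--Johnson--Oellermann characterization is available.
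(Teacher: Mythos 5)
The paper states this result purely as a quotation from \cite{Boutin} and gives no proof of it, so there is nothing internal to compare your argument against; what follows is an assessment of your proposal on its own terms. Your argument is the standard one and is essentially correct for trees that are not paths: the leg/exterior-major-vertex description of resolving sets, the observation that legs at distinct exterior major vertices are disjoint, the resulting classification of minimal resolving sets as ``exactly one vertex on exactly $t(v)-1$ legs at each $v$, and nothing else,'' and the two-case swap all go through. The one ingredient you import wholesale deserves a caveat: what Chartrand--Eroh--Johnson--Oellermann actually prove is the dimension formula $\dim(T)=L(T)-ex(T)$ and the description of metric \emph{bases}; the biconditional you use (that the leg condition characterizes \emph{all} resolving sets) is true but its sufficiency direction requires a separate short argument --- e.g.\ that two vertices $u,u'$ are unresolved by $W$ exactly when $W$ avoids both components of $T-m$ containing $u$ and $u'$, where $m$ is the midpoint of the $u$--$u'$ path, and that under the leg condition at most one component of $T-m$ can be $W$-free for every $m$. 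Without that verification the proof has a (fillable) gap at its foundation.

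Your isolation of the path case is not merely a technicality --- it is the most substantive point in your write-up. For $P_n$ with $n\ge 4$, an endpoint forms a minimal resolving set of size $1$ while a pair of adjacent interior vertices such as $\{p_2,p_3\}$ forms a minimal resolving set of size $2$; by the paper's own Lemma~3.4 this means the exchange property \emph{fails} for such paths under the definition adopted here. So the theorem as quoted is false without excluding paths on at least four vertices (or adopting a different convention), and your restriction to non-paths, with $P_2$ and $P_3$ checked directly, is exactly the right repair. You should state explicitly that this is a correction to the statement as given, not just a ``delicate step.''
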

\begin{Theorem} [\cite{Boutin}, Theorem 7]
For $n\geq 8$, the exchange property does not hold in
wheels $W_{n}$.
\end{Theorem}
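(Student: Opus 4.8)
The plan is to translate resolvability of $W_n$ into a statement about cyclic integer sequences and then to exhibit, for each $n\ge 8$, two minimal resolving sets witnessing that the exchange property fails. Write $h$ for the hub of $W_n$ and $v_0,\dots,v_{n-1}$ for the rim cycle. First I would record the distances: $d(h,v_i)=1$ for all $i$; $d(v_i,v_j)=1$ when $v_i,v_j$ are consecutive on the rim; and $d(v_i,v_j)=2$ otherwise (via $h$). Since $\dim(W_n)\ge 3$ for every $n\ge 7$, the hub is useless in a minimal resolving set: it is adjacent to every rim vertex, so it separates no two rim vertices, while $h$ itself is separated from each $v_i$ once $|S|\ge 3$. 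Hence a minimal resolving set $S$ of $W_n$ consists of rim vertices, and I would encode $S$ by the cyclic sequence $g_1,\dots,g_m$ of its \emph{gaps} (the maximal arcs of rim vertices outside $S$), so that $m=|S|$ and $\sum_i g_i=n-m$. Analysing which rim vertices can share a representation vector yields the characterization: $S$ resolves $W_n$ if and only if (a) every $g_i\le 3$, (b) at most one $g_i$ equals $3$, and (c) no two cyclically consecutive gaps are both $\ge 2$. Deleting $w\in S$ merges its two flanking gaps $g_i,g_{i+1}$ into one gap of length $g_i+1+g_{i+1}$; so $S$ is a minimal resolving set exactly when, for every $w$, this merge violates (a), (b), or (c).

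For $n\ge 8$ with $n\ne 9$ the strategy is to produce two minimal resolving sets of different cardinalities, which by the observation recalled in the text already rules out the exchange property. A near-alternating gap pattern $2,1,2,1,\dots$ (with one $3$-gap to adjust the sum) realizes a metric basis, hence a minimal resolving set, of size $\lfloor(2n+2)/5\rfloor$. A much larger one is built from ``$2,0$''-blocks: if $n\equiv 0\pmod 4$ the periodic sequence $2,0,2,0,\dots,2,0$ of length $n/2$ satisfies (a)--(c), and deleting any of its vertices creates a merged gap of length $3$ sitting next to a gap of length $2$, which violates (c); so this is a minimal resolving set of size $n/2>\lfloor(2n+2)/5\rfloor$. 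For the remaining residues one prepends a single $3,0$ or $3,1$ block and fills in with ``$2,0$'' and ``$1,1,0$'' blocks --- the presence of a $3$-gap makes the two vertices of a ``$1,1$'' piece non-removable via (a)/(b) --- and then checks (a)--(c) and non-removability of every vertex for these finitely many shapes.

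The value $n=9$ must be handled separately, because there every minimal resolving set has size $4=\dim(W_9)$: one verifies that a cyclic sequence of length $m\ge 5$ with $\sum_i g_i=9-m$ satisfying (a)--(c) always contains two consecutive $0$'s (and then the vertex between them is removable) or else has some other vertex whose deletion preserves (a)--(c). Thus the ``different cardinalities'' shortcut is unavailable, and the exchange condition must be checked directly. Labelling the rim by $\mathbb{Z}_9$, I would take $B_2=\{0,3,5,7\}$, with gap pattern $2,1,1,1$, and $r=6$, which lies in the basis $B_1=\{1,3,5,6\}$, with gap pattern $3,1,1,0$. For each $s\in B_2$ one computes that $(B_2\setminus\{s\})\cup\{6\}$ does not resolve $W_9$: removing $0$ or $3$ opens a gap of length $4$, and removing $5$ or $7$ creates two cyclically consecutive gaps of length $2$. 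Hence no single exchange turns $B_2$ into a minimal resolving set containing $r$, and the exchange property fails for $W_9$ as well.

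The hardest step, I expect, is establishing the characterization (a)--(c) cleanly --- i.e., pinning down exactly which pairs of rim vertices can receive the same representation vector --- and then carrying out the residue-by-residue bookkeeping that provides, for every $n\ge 8$ other than $n=9$, an explicit minimal resolving set of non-minimum size in which each vertex is verified non-removable; by comparison, disposing of $n=9$ through the triple $(B_1,B_2,r)$ above is routine.
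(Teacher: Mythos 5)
The paper does not actually prove this statement: it is quoted verbatim from \cite{Boutin} as background, so there is no in-paper proof to compare yours against. Judged on its own, your plan is the standard (and correct) route: reduce to rim vertices, invoke the gap characterization of resolving sets of $W_n$ for $n\ge 7$ (your (a)--(c) match the known criterion from \cite{Buczkowski}), observe that minimality amounts to every single deletion's merged gap violating (a), (b) or (c), and then either exhibit two minimal resolving sets of different sizes or, for $n=9$, refute the exchange condition directly. I checked your explicit cases: the $2,0,2,0,\dots$ pattern for $n\equiv 0\pmod 4$ is a minimal resolving set of size $n/2>\lfloor(2n+2)/5\rfloor$, and for $n=9$ the sets $B_1=\{1,3,5,6\}$, $B_2=\{0,3,5,7\}$ with $r=6$ do defeat every candidate swap. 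Your observation that $n=9$ cannot be handled by the cardinality argument is also correct and is a point that is easy to miss.

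The one genuine soft spot is the block recipe for the remaining residues. As literally stated (``fill in with $2,0$ and $1,1,0$ blocks''), it can produce non-minimal sets: for $n=15$ the pattern $3,0,1,1,0,1,1,0$ uses one $(3,0)$ and two $(1,1,0)$ blocks, but deleting the vertex sitting between the $0$-gap of the first $(1,1,0)$ block and the first $1$-gap of the second merges them into a $2$-gap flanked by two $1$-gaps, which violates none of (a)--(c); that vertex is removable. The underlying claim that ``the presence of a $3$-gap makes the two vertices of a $1,1$ piece non-removable via (a)/(b)'' is only true for the vertex between the two $1$-gaps (which creates a second $3$-gap); the vertex bordering the $0$-gap creates only a $2$-gap and is blocked only by (c), i.e.\ only if an adjacent gap already has size $\ge 2$. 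The fix is to use exactly one $(1,1,0)$ block, placed so that its $0$-gap abuts a $2$-gap or the $3$-gap: the families $a(2,0)$; $(3,0)+a(2,0)$; $(3,0)+(1,1,0)+a(2,0)$; and $(3,1)+(1,1,0)+a(2,0)$ cover $n\equiv 0,1,2,3\pmod 4$ respectively and each member is genuinely minimal (e.g.\ $3,1,1,1,0,2,0$ works for $n=15$). With that adjustment the argument goes through for every $n\ge 8$.
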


Another sufficient condition, for a graph not to have the exchange property, can be  stated as
\begin{Lemma}\label{sufcon}
If there is a minimal resolving set of size $>dim_{\Gamma}$ in a graph $\Gamma$. Then, the exchange property does not hold in $\Gamma$.
\end{Lemma}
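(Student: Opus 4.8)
The plan is to establish the contrapositive: assuming the exchange property holds in $\Gamma$, I will show that every minimal resolving set of $\Gamma$ has the same cardinality, which makes the hypothesis untenable. The first (and easy) observation is that a metric basis $B$ of $\Gamma$ is itself a minimal resolving set in the sense used in this paper: $|B|=dim_{\Gamma}$, and no subset of $B$ of cardinality $dim_{\Gamma}-1$ can resolve $\Gamma$, because $dim_{\Gamma}$ is the least size of a resolving set. Hence, if $W$ is a minimal resolving set with $|W|>dim_{\Gamma}$, then $W$ and $B$ are two minimal resolving sets of different cardinalities, and it suffices to prove that the exchange property forbids the existence of two minimal resolving sets of different sizes.

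For this I would run the argument by which the basis exchange property forces all bases of a matroid to have equal size (see also~\cite{Boutin}). Suppose, for a contradiction, that $R_{1}$ and $R_{2}$ are minimal resolving sets with $|R_{1}|<|R_{2}|$, and pick such a pair with $|R_{1}\cap R_{2}|$ as large as possible. Since $R_{2}$ is minimal, $R_{1}$ cannot be a proper subset of $R_{2}$ (a resolving proper subset would contradict the minimality of $R_{2}$), so there is a vertex $r\in R_{1}\setminus R_{2}$. Applying the exchange property to $r\in R_{1}$ relative to $R_{2}$ produces $s\in R_{2}$ with $R_{2}':=(R_{2}\setminus\{s\})\cup\{r\}$ again a minimal resolving set; since $r\notin R_{2}$ we get $|R_{2}'|=|R_{2}|>|R_{1}|$, so $(R_{1},R_{2}')$ is a pair of the same kind, while $r\in R_{1}\cap R_{2}'$ and $r\notin R_{1}\cap R_{2}$, so the intersection with $R_{1}$ has strictly increased, contradicting the maximality in our choice.

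The step that really needs care---and the one I expect to be the main obstacle---is the claim that $|R_{1}\cap R_{2}'|>|R_{1}\cap R_{2}|$: the exchange property only yields \emph{some} vertex $s\in R_{2}$, and if $s$ happens to lie in $R_{1}$ as well, deleting it cancels the gain from inserting $r$. To handle this I plan to sharpen the extremal choice: among all pairs of minimal resolving sets of different sizes, first maximize the size of the smaller one, and only among those maximize the intersection. Then I can instead push a vertex $r\in R_{2}\setminus R_{1}$ into $R_{1}$: no matter which vertex the exchange property removes, the result $(R_{1}\setminus\{s\})\cup\{r\}$ is a minimal resolving set of size $|R_{1}|+1$, and, as long as $|R_{1}|+1\neq|R_{2}|$, pairing it with $R_{2}$ gives a pair of different sizes whose smaller member exceeds $|R_{1}|$, contradicting maximality; the one leftover case $|R_{2}|=|R_{1}|+1$ (in which, by the same reasoning, no minimal resolving set is larger than $R_{2}$) should yield to a finite iteration of such exchanges. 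With this settled, the opening observation completes the argument: if resolving sets have the exchange property, a minimal resolving set larger than a metric basis cannot exist, which is precisely the lemma.
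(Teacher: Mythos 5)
Your opening reduction is exactly the paper's: a metric basis is itself a minimal resolving set, so the hypothesis produces two minimal resolving sets of different cardinalities, and everything then hinges on the fact that the exchange property forces all minimal resolving sets to have the same size. The paper does not reprove that fact; it is quoted in the expository paragraph preceding the lemma (``If a graph $\Gamma$ has the exchange property, then every minimal resolving set for $\Gamma$ has the same size''), so the paper's entire proof is the one\mbox{-}line observation that a minimum resolving set is also minimal. Had you simply invoked that quoted fact, your argument would coincide with the paper's.

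You instead undertake to prove the equicardinality claim, and that part does not close. You correctly spot the obstruction in the naive matroid-style argument (the deleted vertex $s$ may lie in $R_{1}\cap R_{2}$, so the intersection need not grow), but your repair contains an arithmetic error: applying the exchange property to $r\in R_{2}\setminus R_{1}$ with target $R_{1}$ yields $(R_{1}\setminus\{s\})\cup\{r\}$ with $s\in R_{1}$ and $r\notin R_{1}$, a set of cardinality $|R_{1}|-1+1=|R_{1}|$, not $|R_{1}|+1$. Consequently the ``smaller member'' of your extremal pair does not increase, the maximality of $|R_{1}|$ is never contradicted, and the second extremal argument collapses entirely rather than leaving only the boundary case $|R_{2}|=|R_{1}|+1$. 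The phrase ``should yield to a finite iteration of such exchanges'' defers precisely the whole difficulty: each exchange step is free to delete a vertex of the intersection instead of one outside it, none of the measures you introduce is guaranteed to make progress, and the iteration can cycle. To complete the proof you should either cite the equicardinality consequence of the exchange property as the paper does, or supply an exchange argument with a quantity that provably decreases.
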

\begin{proof}
Since a minimum resolving set is also minimal so there are two minimal resolving sets of different size (one of size $dim_{\Gamma}$ and other is of size $>dim_{\Gamma}$).
\end{proof}
\begin{Lemma}\label{mresolving}
For $q=2$ and $n\geq 3$, the set $V_{n-1}=\{v\in V: \alpha_{n-1}\notin S_v\}$ is a minimal resolving set in $\Gamma(\mathbb{V})$.
\end{Lemma}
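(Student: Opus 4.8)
The plan is to use the especially simple structure of $\Gamma(\mathbb{V})$ when $q=2$. Since the only coefficients available are $0$ and $1$, a vertex is determined by its skeleton, so we may identify the vertex set with the nonempty subsets of $\{\alpha_1,\dots,\alpha_n\}$, two vertices being adjacent exactly when their skeletons meet. For $n\geq 2$ any two vertices with disjoint skeletons $S_x,S_y$ have the common neighbour $\alpha_a+\alpha_b$ with $\alpha_a\in S_x$, $\alpha_b\in S_y$, so $\Gamma(\mathbb{V})$ has diameter $2$; hence $d(x,y)\in\{0,1,2\}$ with $d(x,y)=1$ iff $S_x\cap S_y\neq\emptyset$. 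Consequently, for $w\notin\{x,y\}$ the vertex $w$ resolves $\{x,y\}$ if and only if $S_w$ meets exactly one of $S_x,S_y$, whereas $x$ and $y$ themselves always resolve $\{x,y\}$.

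First I would verify that $V_{n-1}$ is a resolving set. Let $u\neq v$. If one of $u,v$ lies in $V_{n-1}$, it resolves the pair. Otherwise $\alpha_{n-1}\in S_u\cap S_v$; picking $\alpha_i$ in the symmetric difference of $S_u$ and $S_v$ we get $i\neq n-1$, so $\alpha_i\in V_{n-1}$, and $S_{\alpha_i}=\{\alpha_i\}$ meets exactly one of $S_u,S_v$, hence $\alpha_i$ resolves $\{u,v\}$. Thus $V_{n-1}$ resolves $\Gamma(\mathbb{V})$.

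The crux is minimality: I must show that for every $z\in V_{n-1}$ the set $W_z:=V_{n-1}\setminus\{z\}$ fails to resolve $\Gamma(\mathbb{V})$. Given such a $z$, let $v_z$ be the vertex with skeleton $S_z\cup\{\alpha_{n-1}\}$; then $v_z\neq z$ (as $\alpha_{n-1}\in S_{v_z}\setminus S_z$) and $v_z\notin V_{n-1}$. For every $w\in W_z$ we have $\alpha_{n-1}\notin S_w$, so $S_w\cap S_z=S_w\cap S_{v_z}$; since $w\notin\{z,v_z\}$, this yields $d(w,z)=d(w,v_z)$. Hence no vertex of $W_z$ resolves $\{z,v_z\}$, so $W_z$ is not a resolving set. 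As every proper subset of $V_{n-1}$ of cardinality $|V_{n-1}|-1$ has this form, $V_{n-1}$ is a minimal resolving set.

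The one step that requires the right idea is recognising the pair $z,v_z$: although these are genuinely distinct vertices of $\Gamma(\mathbb{V})$ (indeed $v_z\notin V_{n-1}$), they are indistinguishable from the vantage point of $V_{n-1}\setminus\{z\}$, because adjoining $\alpha_{n-1}$ to a skeleton alters adjacency only with vertices whose skeleton already contains $\alpha_{n-1}$, and $V_{n-1}$ has none. Everything else is a routine unwinding of the diameter-$2$ distance formula; I do not expect any further obstacle.
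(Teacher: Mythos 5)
Your proof is correct and follows essentially the same route as the paper's: resolve pairs outside $V_{n-1}$ by the singleton-skeleton vertices $\alpha_i$ with $i\neq n-1$, and establish minimality via the pair $z$ and $z+\alpha_{n-1}$, which no other vertex of $V_{n-1}$ can separate. Your write-up is in fact more careful than the paper's (which writes $\alpha_n$ where $\alpha_{n-1}$ is meant), but the underlying argument is identical.
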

\begin{proof}
Any vertex $u\in V\setminus V_{n-1}$ is either $\alpha_n$ or of the form $w+\alpha_n$, where $w\in V_{n-1}.$ Let $u_1,u_2\in V\setminus V_{n-1}$. Then, there exist some $\alpha_i$ for $1\leq i\leq n-1$ satisfying $d(u_1,\alpha_i)\neq d(u_2,\alpha_i)$, i.e., the representations $r(u_1| V_{n-1})$ and $r(u_2|V_{n-1})$ are different. Hence, $V_{n-1}$ is a resolving set. The set $V_{n-1}$ is minimal resolving set because no vertex in $V_{n-1}\setminus\{w\}$ can resolve the vertices $w$ and $w+\alpha_n.$
\end{proof}
\begin{Theorem}\label{exchange}
For $q=2$ and $n\geq 3,$ the exchange property does not hold in $\Gamma(\mathbb{V})$.
\end{Theorem}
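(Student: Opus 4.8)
The plan is to deduce the statement from Lemma~\ref{sufcon}: it suffices to exhibit, for every $n\ge 3$, a \emph{minimal} resolving set of $\Gamma(\mathbb{V})$ whose cardinality is strictly larger than $dim_{\Gamma(\mathbb{V})}$, which equals $n$ by Theorem~\ref{Mertic Dimension}(b). As soon as such a set is produced, Lemma~\ref{sufcon} gives at once that the exchange property fails.

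First I would dispose of the range $n\ge 4$ using the set already constructed in Lemma~\ref{mresolving}. Since $q=2$, a vertex $v$ lies in $V_{n-1}=\{v\in V:\alpha_{n-1}\notin S_v\}$ exactly when $v$ is a nonzero linear combination of $\alpha_1,\dots,\alpha_{n-2},\alpha_n$, so $|V_{n-1}|=2^{\,n-1}-1$. A one-line induction shows $2^{\,n-1}-1>n$ for all $n\ge 4$; hence $V_{n-1}$ is a minimal resolving set strictly larger than the metric dimension, and Lemma~\ref{sufcon} closes this case.

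The case $n=3$ is where one has to work a little harder, because there $|V_{2}|=2^{2}-1=3=dim_{\Gamma(\mathbb{V})}$, so $V_{n-1}$ is merely a metric basis and gives no information. Here I would instead write down the explicit set
$$W=\{\alpha_1,\ \alpha_1+\alpha_2,\ \alpha_2+\alpha_3,\ \alpha_1+\alpha_2+\alpha_3\}$$
and verify, by a direct distance computation in $\Gamma(\mathbb{V})$ (which has only $2^3-1=7$ vertices and diameter $2$), that (i) the seven representations with respect to $W$ are pairwise distinct, so $W$ is a resolving set, and (ii) each of the four $3$-element subsets of $W$ fails to resolve — for instance $\{\alpha_1,\alpha_1+\alpha_2,\alpha_2+\alpha_3\}$ does not separate $\alpha_1+\alpha_3$ from $\alpha_1+\alpha_2+\alpha_3$, and the remaining three deletions similarly leave a pair unresolved. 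Then $W$ is a minimal resolving set of size $4>3=dim_{\Gamma(\mathbb{V})}$, and Lemma~\ref{sufcon} applies once more.

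The only genuine obstacle is the case $n=3$: the ``generic'' inequality $2^{\,n-1}-1>n$ collapses to an equality precisely there, so one cannot avoid exhibiting an ad hoc minimal resolving set of size $4$ and checking that all four of its triples are non-resolving. That check is routine given the small vertex set and small distances; everything else is an immediate consequence of Theorem~\ref{Mertic Dimension}, Lemma~\ref{mresolving} and Lemma~\ref{sufcon}.
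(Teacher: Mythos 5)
Your proposal is correct and follows essentially the same route as the paper: for $n\geq 4$ it invokes the set $V_{n-1}$ of Lemma~\ref{mresolving} together with the inequality $2^{n-1}-1>n$ and Lemma~\ref{sufcon}, and for $n=3$ it exhibits exactly the same minimal resolving set $\{\alpha_1,\alpha_1+\alpha_2,\alpha_2+\alpha_3,\alpha_1+\alpha_2+\alpha_3\}$ of size $4$. Your version is in fact slightly more complete, since you spell out the minimality check for the four triples that the paper leaves implicit.
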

\begin{proof}
The size of the set $V_{n-1}$ mention in Lemma~\ref{mresolving} is $2^{n-1}-1$. Since $2^{n-1}-1>n=dim_{\Gamma(\mathbb{V})}$ for all $n\geq 4$ so, by Lemma~\ref{sufcon}, the exchange property does not hold in $\Gamma(\mathbb{V})$ for $n>4$. For $n=3,$ the following resolving set is minimal
$$\{\alpha_1,\alpha_1+\alpha_2,\alpha_2+\alpha_3, \alpha_1+\alpha_2+\alpha_3 \}$$ with cardinality $4>3=dim_{\Gamma(\mathbb{V})}$ and the proof is complete by Lemma~\ref{sufcon}.
\end{proof}
Note that, for $q=2$ and $n=2$, the exchange property holds in $\Gamma(\mathbb{V})$ as there are only two minimal resolving sets $\{\alpha_1\}$ and $\{\alpha_2\}.$
\begin{Theorem}
For $q\geq 3$, the exchange property holds in the graph $\Gamma(\mathbb{V})$.
\end{Theorem}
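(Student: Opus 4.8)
The plan is to first pin down exactly which vertex sets are minimal resolving sets of $\Gamma(\mathbb{V})$ when $q\geq 3$, and then deduce the exchange property from that description. The statement to prove is: \emph{for $q\geq 3$, a set $W\subseteq V$ is a minimal resolving set of $\Gamma(\mathbb{V})$ if and only if $W$ omits exactly one vertex from each equivalence class $C_v$.} The case $n=1$ is handled separately, since then $\Gamma(\mathbb{V})=K_{q-1}$ by Theorem~\ref{complet} and the exchange property is immediate; so assume $n\geq 2$. Then $\Gamma(\mathbb{V})$ has diameter $2$ (two vertices with disjoint skeletons $S_x,S_y$ are joined through $\alpha_i+\alpha_j$ for any $\alpha_i\in S_x$, $\alpha_j\in S_y$), and $d(x,y)=1$ precisely when $x\neq y$ and $S_x\cap S_y\neq\emptyset$.

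For the characterization: if $W$ resolves $\Gamma(\mathbb{V})$ then, by Lemma~\ref{equi}, all vertices of a class $C_v$ are mutual twins, so Lemma~\ref{verclass} gives $|C_v\setminus W|\leq 1$ for every $v$. Conversely, assume $|C_v\setminus W|\leq 1$ for all $v$. Two distinct vertices with equal skeletons lie in one class, hence cannot both avoid $W$, so one of them is in $W$ and resolves the pair. If $S_x\neq S_y$, choose $\alpha_i\in S_x\setminus S_y$ (swapping $x,y$ if needed); the length-one class of $\alpha_i$ has $q-1\geq 2$ vertices, so $W$ contains some $w=a\alpha_i$ with $w\notin\{x,y\}$ (possible since $y$ is not in that class and $x\notin W$), and then $d(w,x)=1\neq 2=d(w,y)$. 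Thus $W$ is resolving iff $|C_v\setminus W|\leq 1$ for all $v$ — this is essentially the role of the set $D$ in the proof of Theorem~\ref{Mertic Dimension}(c). Minimality then follows: deleting $w\in W$ destroys resolvability exactly when it makes some class omit two vertices, i.e. when $|C_w\setminus W|=1$; and any resolving $W$ meets every class, since a class of size $\geq 2$ cannot be entirely omitted. Hence $W$ is minimal resolving iff $|C_v\setminus W|=1$ for every $v$, and in particular every minimal resolving set has the same size $\sum_{k=1}^n\binom{n}{k}((q-1)^k-1)=dim_{\Gamma(\mathbb{V})}$.

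The exchange property is now short. Let $W_1,W_2$ be minimal resolving sets, $r\in W_1$, and $C=C_r$. If $r\in W_2$, take $s=r$, so that $(W_2\setminus\{s\})\cup\{r\}=W_2$. If $r\notin W_2$, then $|C\setminus W_2|=1$ forces $C\setminus W_2=\{r\}$; since $|C|\geq 2$ there is some $s\in C\cap W_2$, and for $W'=(W_2\setminus\{s\})\cup\{r\}$ we get $C\setminus W'=\{s\}$ while $C'\setminus W'=C'\setminus W_2$ for every other class $C'$. So $W'$ omits exactly one vertex from each class and is a minimal resolving set, as required.

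The only substantive point is the converse half of the characterization: that omitting a single vertex per class still leaves a resolving set. This is exactly where $q\geq 3$ is used — each length-one class keeps a vertex in $W$ after one deletion, and those vertices resolve all pairs with distinct skeletons; for $q=2$ the classes are singletons and this breaks down, matching Theorem~\ref{exchange}. The remaining steps are routine bookkeeping with the classes $C_v$.
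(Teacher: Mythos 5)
Your proof is correct, but it takes a more structural route than the paper. The paper's argument is a direct double application of Lemma~\ref{verclass}: since $q\geq 3$ every vertex has a twin (via Lemma~\ref{equi}), so if $r\in W_1\setminus W_2$ then some twin $s$ of $r$ must lie in $W_2$, and the ``moreover'' clause of Lemma~\ref{verclass} plus the fact that twins resolve exactly the same pairs gives that the swap preserves minimal resolvability. You instead first prove a complete characterization --- for $q\geq 3$ and $n\geq 2$, $W$ is a minimal resolving set if and only if it omits exactly one vertex from each twin class $C_v$ --- and then read off the exchange property as bookkeeping. The extra work is in the converse half of your characterization (that omitting one vertex per class still resolves), which the paper never needs for this theorem; it requires your diameter-$2$ observation and the fact that each length-one class retains a vertex of $W$, and it is exactly the argument buried in part (c) of Theorem~\ref{Mertic Dimension}. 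What your approach buys is strictly more information: an explicit description of \emph{all} minimal resolving sets and, as a corollary, the fact that they all have the same cardinality $\sum_{k=1}^n\binom{n}{k}((q-1)^k-1)$, which the paper's swap argument does not by itself establish. One presentational nit: in the case $S_x\neq S_y$ you implicitly assume $x,y\notin W$ (otherwise the chosen $w=a\alpha_i$ need not avoid $\{x,y\}$); this is harmless since any pair containing a vertex of $W$ is resolved by that vertex, but it should be said.
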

\begin{proof}
For $q\geq 3$, there is no class $C_v$ of cardinality $1$ for all $v\in V$. By Lemma~\ref{equi}, there is no singleton twin in the graph $\Gamma(\mathbb{V})$. Let $W_{1}\neq W_{2}$ be two minimal resolving sets. Let $u_{1}\in W_1$. Now, if $u_{1}\in W_2,$ then $(W_1\setminus \{u_{1}\})\cup \{u_{1}\}$ is, obviously, a minimal resolving set. Suppose $u_{1}\notin W_2$. Now, since $\Gamma(\mathbb{V})$ is always connected and has no singleton twin. Therefore, by lemma~\ref{verclass}, there exists $ u_{2}\equiv u_1$ such that $ u_{2}\in W_2$. Consequently, again by lemma~\ref{verclass}, $(W_1\setminus \{u_{1}\})\cup \{u_{2}\}$ is a resolving set. Since $d(u_1,w)=d(u_2,w)$ for $u_1\equiv u_2$ and for all $w\in V$. Therefore, two vertices are resolved by the vertex $u_1$ if and only if they are  resolved by the vertex $u_2$. Therefore, $(W_1\setminus \{u_{1}\})\cup \{u_{2}\}$ is  minimal.
\end{proof}
\section{Intersection Graph of $\Gamma(\mathbb{V})$ for $q=2$}
Let $\mathbb{M}$ be the collection of sets. The intersection graph of
$\mathbb{M}$ is the graph $\Gamma(\mathbb{M})=(V,E)$ whose vertices are
the sets in $\mathbb{M}$ and two vertices are connected by an edge
if the corresponding sets intersect. Symbolically\\
$V= \mathbb{M} $ and $ E = \{(A,B)
\subseteq \mathbb{M}: A \cap B \neq \emptyset\}$ (for details see, \cite{Mckee}). The following theorem reflects the importance of intersection graphs which states that every graph can be represented as an intersection graph.
\begin{Theorem}[\cite{Mar}]
Let $\Gamma$ be an arbitrary graph. Then, there is a set  $Y$ and a family $\mathbb{M}$ of subsets $Y_1,Y_2,\ldots,$ of $Y$ which can be put into one-one
correspondence with the vertices of $\Gamma$ in
such a way that the set of  edges is $  E =\{(A,B)
\subseteq \mathbb{M}: A \cap B \neq \emptyset\}$.
\end{Theorem}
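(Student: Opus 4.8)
The plan is to reproduce Marczewski's classical construction, which realises $\Gamma$ as the intersection graph of a suitable family of subsets of the set $Y:=V(\Gamma)\cup E(\Gamma)$. The idea is to attach to each vertex the set consisting of that vertex itself together with all edges incident to it. Concretely, I would fix $Y=V(\Gamma)\cup E(\Gamma)$ (the disjoint union of the vertex set and the edge set) and, for every vertex $v\in V(\Gamma)$, put
$$Y_v=\{v\}\cup\{e\in E(\Gamma):v\in e\}.$$
Then I would set $\mathbb{M}=\{Y_v:v\in V(\Gamma)\}$ and let the one-one correspondence be $v\leftrightarrow Y_v$.

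First I would check injectivity of $v\mapsto Y_v$: if $u\neq v$, then $u\in Y_u$ while the only vertex of $\Gamma$ lying in $Y_v$ is $v$ itself, so $u\notin Y_v$ and hence $Y_u\neq Y_v$. Keeping the singleton $\{v\}$ inside $Y_v$ is exactly what forces the map to be injective even when $\Gamma$ has isolated vertices (for which $Y_v=\{v\}$), so the vertices of $\Gamma$ are indeed in bijection with the members of $\mathbb{M}$.

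Next I would verify the edge condition. For distinct $u,v$ the sets $Y_u$ and $Y_v$ share no vertex (since $u\neq v$), so any element of $Y_u\cap Y_v$ must be an edge incident to both $u$ and $v$; because $\Gamma$ is simple, such an edge exists if and only if $uv\in E(\Gamma)$, and in that case it is the unique member of $Y_u\cap Y_v$. Hence $Y_u\cap Y_v\neq\emptyset$ holds precisely when $uv$ is an edge of $\Gamma$, which is exactly the asserted description $E=\{(A,B)\subseteq\mathbb{M}:A\cap B\neq\emptyset\}$.

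There is no genuinely hard step here; the only points requiring care are the bookkeeping that makes the correspondence one-to-one — hence the need to include $v$ in $Y_v$ rather than just its incident edges, so that isolated vertices are not all identified with $\emptyset$ — and the simple-graph observation that two distinct vertices lie on at most one common edge, so that the intersection of the associated sets detects adjacency and nothing else. If one preferred a different realisation, one could instead take $Y$ to be the family of all cliques (or of maximal cliques) of $\Gamma$ and assign to $v$ the collection of cliques containing $v$; the verification is entirely analogous, with the isolated-vertex case again handled by remembering $v$ itself.
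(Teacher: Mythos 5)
Your construction is correct and is precisely the classical Marczewski argument: the paper itself states this result without proof (it is quoted from the cited 1945 paper of Marczewski), so there is no internal proof to compare against, but your family $Y_v=\{v\}\cup\{e\in E(\Gamma):v\in e\}$ over $Y=V(\Gamma)\cup E(\Gamma)$, with the singleton $\{v\}$ retained to keep the map injective on isolated vertices, is exactly the standard realisation of a graph as an intersection graph. Both the injectivity check and the verification that $Y_u\cap Y_v\neq\emptyset$ if and only if $uv\in E(\Gamma)$ are carried out correctly, so nothing is missing.
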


The case, where the field $\mathbb{F}$ has two elements, i.e., $q=2$. Then, the graph $\Gamma(\mathbb{V})$
of a vector space $\mathbb{V}$ can be represented by the intersection graph $\Gamma(\mathbb{M})$ for $\mathbb{M}=P(Y)\setminus\emptyset$, where
$Y= \{1,2,3,....n\}$ and $P(Y)$ the power set of $Y$. This
representation can be established by identifying the vector
$\alpha_{i_1}+\alpha_{i_2}+\cdots+\alpha_{i_k}$ with the subset
$\{i_{1},i_{2},\ldots,i_{k}\}$ of $Y$.

\begin{Corollary}
For $\mathbb{M}=P(Y)\setminus\emptyset$, where $Y= \{1,2,3,....n\}$\\
a) $dim_{\Gamma(\mathbb{M})}=n$ and \\
b) $\Gamma(\mathbb{M})$ holds the exchange property if and only if $n=2.$
\end{Corollary}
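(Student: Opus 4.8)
The plan is to transport everything along the graph isomorphism $\Gamma(\mathbb{V})\cong\Gamma(\mathbb{M})$ sketched in the last paragraph of this section, and then simply quote the facts already established for $\Gamma(\mathbb{V})$ in the case $q=2$. So the first step is to make that isomorphism precise. Take $\mathbb{V}$ to be the $n$-dimensional vector space over the two-element field with $\mathbb{V}$-basis $\{\alpha_1,\ldots,\alpha_n\}$. Since the only nonzero scalar in $\mathbb{F}$ is $1$, every nonzero vector has the form $\alpha_{i_1}+\cdots+\alpha_{i_k}$ for a unique nonempty set $\{i_1,\ldots,i_k\}\subseteq Y$, and its skeleton is exactly $\{\alpha_{i_1},\ldots,\alpha_{i_k}\}$. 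Hence the assignment $\alpha_{i_1}+\cdots+\alpha_{i_k}\mapsto\{i_1,\ldots,i_k\}$ is a bijection $V\to\mathbb{M}$, and two vectors share a basis element with nonzero coefficient (equivalently, their skeletons meet, equivalently they are adjacent in $\Gamma(\mathbb{V})$) if and only if the corresponding subsets of $Y$ have nonempty intersection, i.e.\ their images are adjacent in $\Gamma(\mathbb{M})$. Thus the map is a graph isomorphism, so it preserves distances, resolving sets, minimal resolving sets, metric dimension, and the exchange property.

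Given this, part (a) follows at once: $dim_{\Gamma(\mathbb{M})}=dim_{\Gamma(\mathbb{V})}$, and Theorem~\ref{Mertic Dimension}(b) evaluates the right-hand side as $n$ for $n\ge 3$ (with part (a) of that theorem covering $n=2$; alternatively one lists the small cases directly). For part (b), the exchange property holds in $\Gamma(\mathbb{M})$ exactly when it holds in $\Gamma(\mathbb{V})$. By Theorem~\ref{exchange} it fails for $q=2$ and $n\ge 3$, while the remark following Theorem~\ref{exchange} records that it holds for $q=2$, $n=2$, where the only two minimal resolving sets $\{\alpha_1\},\{\alpha_2\}$ correspond to $\{\{1\}\}$ and $\{\{2\}\}$. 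Combining these, $\Gamma(\mathbb{M})$ has the exchange property if and only if $n=2$.

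There is no real obstacle here: the entire content is the observation that over $\mathbb{F}_2$ a vector is interchangeable with its skeleton, so adjacency on both sides is the nonempty-intersection relation on the very same family of subsets of $Y$. Once that identification is written down, every clause of the corollary is a one-line consequence of a previously proved result. The only point worth a word of care is the degenerate value $n=1$, for which $\Gamma(\mathbb{M})$ is a single vertex and both statements become vacuous; the corollary is to be read for $n\ge 2$, in line with the hypotheses of the theorems being invoked.
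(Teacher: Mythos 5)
Your proposal is correct and follows essentially the same route as the paper: the paper likewise derives part (a) from Theorem~\ref{Mertic Dimension}(b) and part (b) from Theorem~\ref{exchange} (together with the remark covering $n=2$), relying on the identification of $\Gamma(\mathbb{V})$ over $\mathbb{F}_2$ with $\Gamma(\mathbb{M})$ described just before the corollary. You merely spell out that isomorphism more explicitly than the paper does, which is a welcome but not substantive difference.
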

\begin{proof}
Part a) of the theorem follows from part b) of Theorem~\ref{Mertic Dimension} and part b) of the theorem follows from Theorem~\ref{exchange}.
\end{proof}
\section{Conclusion}
In this paper, we find the metric dimension of the graph $\Gamma(\mathbb{V})$. We study the relation between a metric basis $\Gamma(\mathbb{V})$-basis and a vector space basis $\mathbb{V}$-basis. We proved that, except the case where the corresponding field has $2$ elements and dimension of $\mathbb{V}$ is $\geq 3$, a $\Gamma(\mathbb{V})$-basis has the exchange property like a $\mathbb{V}$-basis. We also concluded that a $\Gamma(\mathbb{V})$-basis contains a $\mathbb{V}$-basis if the corresponding field has $3$ or more elements.

\end{document}